\def\RR{\mathbb{R}}
\newcommand\tr{\operatorname{Trace}}
\newcommand\Div{\operatorname{div}}
\def\Ric{\operatorname{Ric}}
\def\<{\langle}
\def\>{\rangle}
\def\eq{\hspace*{-1.5mm}&=&\hspace*{-1.5mm}}
\newtheorem{corollary}{Corollary}
\newtheorem{remark}{Remark}
\newtheorem{lemma}{Lemma}
\newtheorem{proposition}{Proposition}
\newtheorem{theorem}{Theorem}
\author{Vladimir Rovenski\footnote{Department of Mathematics, University of Haifa, Mount Carmel, 31905 Haifa, Israel.
  \newline
  e-mail: {\tt vrovenski@univ.haifa.ac.il}}
  \ and \
  Robert Wolak\footnote{
  Faculty of Mathematics and Computer Science, Institute of Mathematics of Jagiellonian University,
  Lojasiewicza 6, 30-348 Krakow, Poland.
  e-mail: {\tt robert.wolak@im.uj.edu.pl}}
}
\title{On Ricci curvature of metric structures on $\mathfrak{g}$-manifolds}
\begin{document}

\date{}
\maketitle

\begin{abstract}
We study the properties of Ricci curvature of ${\mathfrak{g}}$-manifolds with particular attention paid to higher dimensional abelian Lie algebra case.
The relations between Ricci curvature of the manifold and the Ricci curvature of the transverse manifold of the characteristic foliation are investigated.
In particular, sufficient conditions are found under which the ${\mathfrak{g}}$-manifold can be a Ricci soliton or a gradient Ricci soliton.
Finally, we obtain a amazing (non-existence) higher dimensional generalization of the Boyer-Galicki theorem on Einstein K-manifolds for a special class of abelian ${\mathfrak{g}}$-manifolds.

\vskip1.5mm\noindent
\textbf{Keywords}: ${\mathfrak{g}}$-manifold, totally geodesic, Ricci curvature, almost ${\cal S}$-structure, Einstein mani\-fold, Ricci soliton.

\vskip1.5mm\noindent
\textbf{Mathematics Subject Classifications (2010)} Primary 53C12; Secondary 53C21
\end{abstract}

\section*{Introduction}

In \cite{A}, D.\,Alekseevsky and P.\,Michor have began the study of
general ${\mathfrak{g}}$-manifolds, that is
smooth manifolds $M$ with an action of constant rank of a Lie algebra ${\mathfrak{g}}$, or a bit more restrictively,
a locally free action.
Several classes of ${\mathfrak{g}}$-manifolds with some additional geometrical structures have been studied in great detail.
If we take a 1-dimensional Lie algebra, a ${\mathfrak{g}}$-manifold is then just a smooth manifold with a nonvanishing vector field.
In this case we have almost contact, strict contact, (almost) contact metric structures as well as K-contact and Sasakian ones, cf. \cite{b2010,bg}.
If~we take a $p$-dimensional (with $p>1$) abelian Lie algebra, then as an example of such ${\mathfrak{g}}$-manifolds can serve a manifold equipped with $f$-{structure with parallelizable kernel, e.g., \cite{cff90,gy,yan},
that is a smooth manifold with a $p$-dimensional distribution (plane field). A special case of this structure are ${\cal K}, {\cal S}$ or ${\cal C}$ manifolds studied by many authors, e.g., cf. \cite{fip} and the references there. For $p=3$, 3-Sasakian manifolds, cf. \cite{bg}, form a special class of $so(3)$-manifolds.

 G.\,Cairns demonstrated that the study of geometrical and topological properties of totally geodesic foliations can be reduced to the study of these properties of the derived $\mathfrak{g}$-manifolds, cf. \cite{ca-toh}.
 As~the author is interested more in foliations themselves, these structures are called \textit{tangentially Lie foliations}.
Namely, a~foliation $\mathcal F$ on a Riemannian manifold $(M,g)$ is called a \textit{tangentially Lie foliation} if there is a complete Lie parallelism $\{\xi_1,\ldots,\xi_p\}$ along its leaves that preserves the horizontal subbundle ${\mathcal F}^{\perp}$.
Another term used is a tangentially $\mathfrak{g}$-foliation, where $\mathfrak{g}$ is the Lie subalgebra of ${\mathcal X}_M$ spanned by the tangential vector fields $\{X_1,\ldots,X_p\}$. For~convenience, suppose that the vector fields $X_i$ are orthonormal.
Obviously, a manifold with a tangentially Lie foliation is a ${\mathfrak{g}}$-manifold, and a ${\mathfrak{g}}$-manifold with a locally free ${\mathfrak{g}}$-action admits a tangentially ${\mathfrak{g}}$-foliation.
If~the action of the Lie algebra is not locally free then the situation is a bit more complicated.
Another context in which ${\mathfrak{g}}$-manifolds appear is the geometrical study of ODE's and PDE's on smooth manifolds.
If an ODE or PDE admits a nontrivial symmetry group, or more precisely, nontrivial infinitesimal symmetries, and these
symmetries define a regular foliation, the ambient manifold is a ${\mathfrak{g}}$-manifold. The properties of this foliated ${\mathfrak{g}}$-manifold influence the properties of solutions of our system of differential equations, and in some cases permit to solve the system effectively, cf.~\cite{olv,Wo-comp}.

In recent years, in the light of new and interesting physical applications, a lot of attention has been paid to K-contact and Sasakian structures.
They can be understood as $\mathfrak{g}$-manifolds with one-dimensional $\mathfrak{g}$ and a very special transverse structure -- K\"ahlerian.
When we try to understand higher dimensional corresponding structures, we have to take into account the algebraic structure of $\mathfrak{g}$.
In fact, such manifolds have been studied for many years within the framework of the so-called $f$-\textit{structure}  using tensor fields.
Later, a systematic study of Sasakian and 3-Sasakian manifolds as foliated manifolds with a very special characteristic foliation was carried out by Ch. Boyer and K. Galicki, for a summary of these results see \cite{bg}. The second author also made some contribution to this research, see \cite{wo00}.
In~this paper, we will use $\varphi$ instead of $f$ -- a symbol being reserved for mappings between manifolds and smooth functions depending on the context.

\section{Ricci curvature of $\mathfrak{g}$-foliated manifolds}

We are interested in those $\mathfrak{g}$-manifolds, whose characteristic foliation is regular and its dimension is equal to the dimension of the Lie algebra~$\mathfrak{g}$.
Let $G$ be a simply connected Lie group, whose Lie algebra is isomorphic to ${\mathfrak{g}}$. Then there is a natural locally free right action of $G$ on the manifold $M$, whose orbits are the leaves of the foliation $\mathcal F$, cf.~\cite{pa}.

Assume that the action is isometric. Then the Lie group $G$ of the Lie algebra $\mathfrak{g}$ can be mapped in the group of isometries ${\rm Isom}(M,g)$ of the Riemannian manifold $(M,g)$. As the manifold $M$ is compact the group ${\rm Isom}(M,g)$ is also compact. So, the closure $\bar G$ of $G$ in ${\rm Isom}(M,g)$ is compact and its orbits are the closures of leaves of our foliation.
Any bi-invariant Riemmannian metric on the group $\bar G$ restricted to $G$ defines a Riemannian metric on the tangent bundle to the leaves for which the action of $G$ is isometric. As the orthogonal complement is invariant we can extend this metric to a bundle-like metric on the manifold for which the leaves of the canonical foliation are totally geodesic.
As the Riemannian metric when restricted to the leaves is induced by a bi-invariant metric of a compact Lie group.
Corollary~1.4 in \cite{mil} ensures that the sectional curvature of our metric $g$,
\[
 K(u,v) \ge 0
\]
for any vectors $u,v \in T{\mathcal F}$. Thus, the Ricci curvature of the leaves
\[
 \Ric_{\mathcal F}(u,u) \ge 0,
\]
and the equality holds if and only if $u$ is orthogonal to $[\mathfrak{g},\mathfrak{g}]$.
Therefore, for abelian groups $K(u,v) =0$ and $\Ric_{\mathcal F}(u,u)=0$ for $u,v \in T{\mathcal F}$.

\begin{theorem}\label{T-01} Let ${\cal F}$ be a totally geodesic Riemannian foliation of a compact manifold $(M,g)$. If the leaves of ${\cal F}$ have non-negative Ricci curvature and ${\cal F}$ is modeled on a Riemannian manifold of positive Ricci curvature then the foliated manifold admits a bundle-like metric of positive Ricci curvature.
\end{theorem}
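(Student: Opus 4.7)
The plan is the canonical variation of the bundle-like metric in the spirit of Berger and O'Neill. A totally geodesic Riemannian foliation is locally a Riemannian submersion with totally geodesic fibers onto an open set of the transverse model, so the O'Neill integrability tensor $A$ on the horizontal distribution $T\mathcal{F}^{\perp}$ is available, while its twin $T$-tensor vanishes identically; this is precisely the set-up in which the Besse formulas of Chapter~9 of \emph{Einstein Manifolds} give the Ricci curvature of a one-parameter deformation.

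Concretely, I would introduce the family
\[
 g_{t} \;:=\; t^{2}\, g|_{T\mathcal{F}} \;\oplus\; g|_{T\mathcal{F}^{\perp}}, \qquad t>0,
\]
which rescales only along the leaves. Since the original metric is bundle-like with totally geodesic leaves, each $g_{t}$ is again bundle-like, the leaves of $\mathcal{F}$ stay totally geodesic in $g_{t}$, and the transverse model together with its (positive) Ricci tensor is unchanged. For a $g$-unit horizontal vector $X$ and $g$-unit vertical vector $U$, Besse's canonical variation formulas read
\[
 \Ric^{g_{t}}(X,X) \;=\; \check{\Ric}(X,X)-2t^{2}\,\|A_{X}\|_{g}^{2}, \qquad
 \Ric^{g_{t}}(U,U) \;=\; \hat{\Ric}(U,U)+t^{4}\,\|\check{A}\,U\|_{g}^{2},
\]
with cross $X$--$U$ entries of order $t^{2}$. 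After dividing by the corresponding $g_{t}$-norms squared, the horizontal sectional Ricci tends to $\check{\Ric}(X,X)/|X|_{g}^{2}>0$ and the vertical sectional Ricci becomes $t^{-2}\hat{\Ric}(U,U)+t^{2}\|\check{A}\,U\|_{g}^{2}$, which is non-negative for every $t>0$ by the hypothesis on the leaves.

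Turning non-negativity into uniform strict positivity is the main step. On the compact transverse closure, the positive transverse Ricci is bounded below by a constant, which controls horizontal directions for $t$ small. On the part of the unit vertical sphere bundle where $\hat{\Ric}(U,U)>0$, compactness gives a positive lower bound that is amplified by $t^{-2}$ as $t\to 0$. The hard case is the null cone $\{U:\hat{\Ric}(U,U)=0\}$, where positivity must come from the $t^{2}\|\check{A}\,U\|_{g}^{2}$ term; here one has to verify that $\check{A}\,U\neq 0$ whenever $0\neq U$ lies in that null cone, so that compactness produces a uniform positive lower bound of order $t^{2}$. A standard Cauchy--Schwarz estimate absorbs the $O(t^{2})$ mixed terms, and combining the three estimates yields $\Ric^{g_{t}}>0$ pointwise on $M$ for all sufficiently small $t>0$, producing the desired bundle-like metric of positive Ricci curvature.
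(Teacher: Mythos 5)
Your overall strategy---shrink the leaves via the canonical variation $g_t=t^2\,g|_{T\mathcal F}\oplus g|_{T\mathcal F^{\perp}}$, use the O'Neill/Besse formulas for totally geodesic fibres, and localize the Riemannian foliation to Riemannian submersions---is essentially the route the paper takes: its ``vertical $\varphi$-warping'' $e^{2\varphi}g^v+g^h$ from Gromoll--Walschap reduces to your $g_t$ when the basic function $\varphi$ is the constant $\log t$, and the localization remark is the same. Your horizontal estimate and the absorption of the $O(t^2)$ mixed terms are fine.

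However, there is a genuine gap exactly at the step you yourself flag: the claim that $\check{A}\,U\neq 0$ for every nonzero vertical $U$ in the null cone of the leafwise Ricci curvature. This does not follow from the hypotheses, and in fact cannot: take the Riemannian product $S^2\times T^2$ foliated by the flat torus factors. This is a totally geodesic Riemannian foliation of a compact manifold, the leaves have (identically zero, hence non-negative) Ricci curvature, and the foliation is modeled on the round $S^2$, which has positive Ricci curvature; yet the O'Neill tensor vanishes identically, the vertical Ricci curvature of every $g_t$ (indeed of every vertical warping, by the maximum principle applied to the basic warping function) remains zero, and no metric of positive Ricci curvature exists on this manifold at all, since $\pi_1(S^2\times T^2)=\mathbb{Z}^2$ is infinite and Myers's theorem would force it to be finite. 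So the argument cannot be completed from the stated hypotheses; one needs an additional assumption such as finiteness of $\pi_1(M)$, strict positivity of $\Ric_{\mathcal F}$, or non-vanishing of $X\mapsto A_XU$ on the Ricci-null vertical directions. For what it is worth, the paper's own proof is a one-line deferral to Gromoll--Walschap, Theorem 2.7.3, and is silent on precisely this point, so you have put your finger on the step where the published argument is weakest.
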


\begin{proof} It is a simple generalization of \cite[Theorem 2.7.3]{gw}.
 The tangent bundle $TM$ admits the splitting $T{\mathcal F} \oplus {\mathcal H}$, where the horizontal subbundle ${\mathcal H}$ is the orthogonal complement of $T{\mathcal F}$. As in \cite[Chapter 2]{gw}, we can define the vertical warping of the Riemannian metric $g$.
For any vector $w \in TM = T{\mathcal F} \oplus {\mathcal H}$ let $w^v$ and $w^h$ be its vertical and horizontal component, respectively.
Let $\varphi$ be any basic function on the foliated manifold $(M,{\mathcal F})$. The vertical $\varphi$-warping of the Riemannian metric is defined as follows:
\[
 g_{\varphi}(w_1,w_2) = e^{2\varphi(x)}g(w_1^v,w_2^v) + g(w_1^h,w_2^h).
\]
The formulae proved in \cite[Chapter 2]{gw} are formulated and demonstrated for Riemannian submersions with totally geodesic fibres.
They are of local character. Therefore, they are also verbatim true for Riemannian foliations with totally geodesic leaves as such foliations are locally defined by Riemannian submersions with totally geodesic fibres.
Theorem 2.7.3 of \cite{gw} is, in fact, true for Riemannian submersions of compact manifolds with totally geodesic fibres.
Therefore, its proof is also valid in our case.
\end{proof}

Using Theorem~\ref{T-01} and the considerations preceding its formulation, we obtain

\begin{corollary}
Let $M$ be a $\mathfrak{g}$-manifold given by an isometric action, and the characteristic foliation is modeled on a manifold of positive Ricci curvature.
 Then $M$ admits a Riemannian metric of positive Ricci curvature.
\end{corollary}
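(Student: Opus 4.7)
The plan is essentially a packaging exercise: the discussion preceding Theorem~\ref{T-01} already shows that on any compact $\mathfrak{g}$-manifold with isometric $\mathfrak{g}$-action one can build a bundle-like metric whose characteristic foliation is totally geodesic with leaves of non-negative Ricci curvature. Adding the hypothesis that the transverse model has positive Ricci curvature then supplies exactly the input required by Theorem~\ref{T-01}, and the conclusion follows.

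More concretely, I would first recall that, because the $G$-action on $M$ is isometric and $M$ is (tacitly) compact, the closure $\bar G \subset \mathrm{Isom}(M,g)$ is a compact Lie group. Pulling back any bi-invariant metric on $\bar G$ through $G \hookrightarrow \bar G$ yields a $G$-invariant metric on $T\mathcal F$; extending it by the original $g$ on the horizontal distribution $\mathcal F^{\perp}$, which is $G$-invariant because $G$ acts by isometries, produces a bundle-like metric for which the leaves of the characteristic foliation $\mathcal F$ are totally geodesic. By Milnor's Corollary~1.4, invoked as in the preamble to Theorem~\ref{T-01}, each leaf is locally isometric to a Lie group carrying a bi-invariant metric, so $K(u,v) \ge 0$ and hence $\Ric_{\mathcal F}(u,u) \ge 0$ for all tangent $u,v \in T\mathcal F$.

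Second, the corollary's assumption provides precisely that $\mathcal F$ is modeled on a Riemannian manifold of positive Ricci curvature. All three hypotheses of Theorem~\ref{T-01} --- compactness, totally geodesic Riemannian foliation with non-negative Ricci curvature on the leaves, and positive Ricci curvature on the transverse model --- are now in place, and a direct application produces a bundle-like metric on $M$ of positive Ricci curvature.

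The only delicate point is the tacit compactness assumption: Theorem~\ref{T-01} is stated for compact $M$, and the construction of the $G$-invariant leaf metric above exploits compactness of $\bar G$, which in turn is inherited from that of $M$. I would therefore expect the corollary to implicitly assume $M$ compact; under this assumption the deduction is immediate, and no genuinely new estimate beyond what is already packaged in Theorem~\ref{T-01} is required.
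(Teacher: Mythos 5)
Your proposal matches the paper's own argument: the paper derives this corollary directly from Theorem~1 together with the preceding construction of a bundle-like metric (via a bi-invariant metric on the compact closure $\bar G\subset\mathrm{Isom}(M,g)$) making the leaves totally geodesic with non-negative Ricci curvature, exactly as you describe. Your remark on the tacit compactness assumption is a fair observation, but no new idea is involved; the deduction is the same.
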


If the acting group $G$ on the $\mathfrak{g}$-manifold is compact, then we can construct a bundle-like metric with tangent sectional curvature non-negative,
so the orbits are submanifolds of non-negative Ricci curvature. Combining Theorem~\ref{T-01} with Myers's Theorem we get

\begin{theorem} Let a compact group $G$ act locally freely and isometrically  on a compact Riemannian manifold $(M,g)$. If the induced Riemannian foliation is modeled on a Riemannian manifold with positive Ricci curvature then the fundamental group of the  manifold $M$ is finite.
\end{theorem}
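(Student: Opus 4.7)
The plan is to chain together the three pieces of input the statement hands us: compactness of $G$, the hypothesis on the transverse model, and Myers's theorem. The output of Theorem~\ref{T-01} is exactly a bundle-like metric of positive Ricci curvature on $M$, and on a compact manifold such a metric automatically has Ricci curvature bounded below by a positive constant, at which point Myers applies verbatim.

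First I would invoke the compactness of $G$ together with the discussion immediately preceding the theorem: a bi-invariant metric on $G$ (which exists because $G$ is compact) transported to the orbits gives, after extending by the orthogonal complement, a bundle-like metric for the characteristic foliation $\mathcal F$ whose leaves are totally geodesic. By Milnor's Corollary~1.4 (already cited in the excerpt), the sectional curvature along the leaves is non-negative, and hence the Ricci curvature of the leaves is non-negative. Since the action is locally free the leaves are the $G$-orbits, and the assumption that $\mathcal F$ is modeled on a manifold of positive Ricci curvature is preserved by this choice of bundle-like metric, as the transverse metric may be prescribed independently.

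Next I would apply Theorem~\ref{T-01} to this bundle-like metric: all of its hypotheses hold (compact manifold, totally geodesic Riemannian foliation, leaves with non-negative Ricci, transverse model of positive Ricci), so $M$ admits a bundle-like metric $\tilde g$ of strictly positive Ricci curvature. Because $M$ is compact, the continuous function $u \mapsto \Ric_{\tilde g}(u,u)$ on the unit tangent bundle attains its minimum, which is therefore a positive constant $c>0$.

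Finally I would apply Myers's theorem to $(M,\tilde g)$: since $\Ric_{\tilde g} \ge c > 0$, the universal Riemannian cover $\widetilde M$ has the same lower Ricci bound and is therefore compact with finite diameter, whence $\pi_1(M)$, which acts freely and properly discontinuously on $\widetilde M$ with compact quotient $M$, must be finite. The only real point to check carefully is the compatibility of the two metric constructions, namely that the vertical warping used in the proof of Theorem~\ref{T-01} preserves the hypothesis that the leaves are $G$-orbits of a locally free isometric action; this is automatic because warping by a basic function does not affect the holonomy along leaves or the foliation itself, so no genuine obstacle arises and the argument is essentially a two-line concatenation of Theorem~\ref{T-01} with Myers's theorem.
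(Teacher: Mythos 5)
Your proposal is correct and follows essentially the same route as the paper: the text preceding the theorem constructs the bundle-like metric with totally geodesic orbits of non-negative Ricci curvature from a bi-invariant metric on the compact group, and the paper then states the result as the combination of Theorem~\ref{T-01} with Myers's theorem, exactly as you do. Your additional remarks (extracting a uniform positive lower Ricci bound by compactness and checking that the vertical warping does not disturb the foliation) only spell out details the paper leaves implicit.
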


As an immediate corollary we get the following result on 3-Sasakian manifolds.

\begin{corollary}
Let $M$ be a compact 3-Sasaki-manifold. Then if the characteristic foliation is modeled on a manifold of positive Ricci curvature, then the fundamental group of $M$ is finite.
\end{corollary}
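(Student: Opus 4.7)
The plan is to reduce this statement to the preceding theorem by verifying that a compact 3-Sasakian manifold satisfies its hypotheses, namely the existence of a locally free isometric action of a compact Lie group realising the characteristic foliation. All the geometric content has already been developed; the remaining work is essentially structural.

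First I would recall that a 3-Sasakian structure consists of three Sasakian structures $(\varphi_a, \xi_a, \eta_a, g)$ for $a = 1,2,3$ whose characteristic vector fields $\xi_1, \xi_2, \xi_3$ are mutually orthonormal and satisfy the commutation relations $[\xi_a, \xi_b] = 2\,\epsilon_{abc}\,\xi_c$. Hence they span a Lie subalgebra of $\mathcal{X}_M$ isomorphic to $so(3) \simeq su(2)$, and this is precisely the $\mathfrak{g}$ of the associated $\mathfrak{g}$-manifold structure. The simply connected Lie group with this Lie algebra is $SU(2) \simeq Sp(1)$, which is compact. Moreover, since each $\xi_a$ is a Killing vector field, the induced action of $SU(2)$ on $(M,g)$ is isometric, and since the $\xi_a$ are pointwise linearly independent, the action is locally free. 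The characteristic foliation $\mathcal{F}$ is exactly the orbit foliation of this action.

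At this point the hypotheses of the preceding theorem are in place: a compact Lie group $G = SU(2)$ acts locally freely and isometrically on the compact manifold $(M,g)$, inducing a Riemannian foliation $\mathcal{F}$. By assumption, the transverse (model) manifold of $\mathcal{F}$ has positive Ricci curvature. Applying the theorem directly then yields that $\pi_1(M)$ is finite, which is what we want.

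I do not expect a genuine obstacle here; the statement is a clean specialisation. The only slightly subtle point is ensuring that the bundle-like metric constructed in Theorem~\ref{T-01} and used in the preceding theorem can be taken to coincide with (or be obtained from) the given 3-Sasakian metric, since one does not wish to lose the compactness and foliation-compatibility along the way. This is automatic because the vertical warping in the proof of Theorem~\ref{T-01} modifies the metric only along the leaves and preserves the horizontal distribution, so the resulting metric remains bundle-like with respect to $\mathcal{F}$, while the fundamental group is a diffeomorphism invariant and does not depend on which metric in this family is used to apply Myers's theorem.
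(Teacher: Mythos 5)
Your proposal is correct and follows exactly the route the paper intends: the paper states this corollary as ``immediate'' from the preceding theorem, and your argument simply verifies its hypotheses (the Reeb fields span $so(3)$, generating a locally free isometric action of the compact group $Sp(1)\simeq SU(2)$ whose orbit foliation is the characteristic foliation). The extra remark about the vertical warping preserving the relevant structure is a reasonable precaution but not a point the paper itself addresses.
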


If the Lie algebra $\mathfrak{g}$ is abelian, then the same proof yields the following.

\begin{corollary}
Let $M$ be a $\mathfrak{g}$-manifold given by an isometric action with $g$ abelian, and let the characteristic foliation be modeled on a manifold of negative Ricci curvature. Then $M$ admits a Riemannian metric of negative Ricci curvature.
\end{corollary}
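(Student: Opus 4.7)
The plan is to follow the proof of Theorem~\ref{T-01} essentially verbatim, reversing the sign of every Ricci curvature condition. The one additional ingredient is the abelian hypothesis, which by the discussion preceding Theorem~\ref{T-01} makes the leaves of the characteristic foliation ${\cal F}$ \emph{flat}: $K(u,v)=0$ and $\Ric_{\cal F}(u,u)=0$ for every $u,v \in T{\cal F}$. In particular the leaves satisfy $\Ric_{\cal F}\le 0$ trivially, which is exactly the hypothesis demanded by the negative-Ricci analog of \cite[Theorem~2.7.3]{gw}.

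First I would perform the same reduction as in the paragraphs preceding Theorem~\ref{T-01}: the compactness of $\bar G\subset\mathrm{Isom}(M,g)$ together with a bi-invariant metric on it yields a bundle-like metric for which the leaves of ${\cal F}$ are totally geodesic and flat. Then I would apply the vertical $\varphi$-warping
\[
 g_{\varphi}(w_1,w_2) = e^{2\varphi(x)}g(w_1^v,w_2^v) + g(w_1^h,w_2^h)
\]
with $\varphi$ a suitable basic function on $(M,{\cal F})$, and invoke the negative-Ricci analog of \cite[Theorem~2.7.3]{gw}, whose proof is obtained from the positive-Ricci case by reversing inequalities throughout. As in the proof of Theorem~\ref{T-01}, the Gromoll--Walschap submersion formulas are of local character and therefore transfer without change from Riemannian submersions with totally geodesic fibres to Riemannian foliations with totally geodesic leaves.

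The main technical difficulty---and the reason the abelian hypothesis is indispensable rather than a mere convenience---is the sign of the integrability contribution in the O'Neill-type Ricci formulas for a submersion with totally geodesic fibres: on horizontal vectors this contribution is $-2|A_X|^2$, which reinforces the desired negativity, but on vertical vectors it is $+|A^{\star}U|^2$, which a priori works against it. Because $\Ric_{\cal F}\equiv 0$ identically in the abelian case, a sufficiently small vertical warping (letting $\varphi\to -\infty$, so that the fibres shrink) renders both integrability contributions negligible relative to the strictly negative transverse Ricci $\Ric_B$, while the $\varphi$-Hessian terms introduced by the warping can be tuned, using compactness of $M$, to supply the vertical Ricci the required strict negativity. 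The resulting bundle-like metric $g_\varphi$ on $M$ then has $\Ric_{g_\varphi}<0$ in every direction, as required.
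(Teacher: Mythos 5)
You take the same route the paper intends: the paper's entire ``proof'' of this corollary is the remark that the same argument as for Theorem~\ref{T-01} applies with all signs reversed, using that an abelian $\mathfrak{g}$ forces the leaves of the characteristic foliation to be flat and hence of nonpositive Ricci curvature. Your reduction to a bundle-like metric with totally geodesic flat leaves, followed by a vertical warping and a sign-reversed version of \cite[Theorem~2.7.3]{gw}, is exactly the strategy the authors have in mind.

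However, the technical difficulty you yourself isolate is not resolved by your proposed fix, and it is a genuine gap. For the canonical variation (constant warping, fibres scaled by $t\to 0$) over flat fibres one has, for a vertical direction $U$, a vertical Ricci of the form $\Ric_{\mathcal F}(U,U)+t^{2}|A^{*}U|^{2}=t^{2}|A^{*}U|^{2}\ge 0$, so this step can never produce strictly negative Ricci curvature in the leaf directions; in the positive-Ricci case the term $+t^{2}|A^{*}U|^{2}$ is precisely what rescues the degenerate directions, and there is no analogue with the opposite sign. Your suggestion to extract the missing strict negativity from the Hessian terms of a non-constant basic warping function $\varphi$ fails on a compact manifold: those terms contribute to the vertical Ricci a quantity proportional to $-(\Delta\varphi+p\,|\nabla\varphi|^{2})$, and making this strictly negative everywhere is equivalent to $\Delta\bigl(e^{p\varphi}\bigr)>0$ everywhere, which contradicts the divergence theorem on a closed (transverse) manifold. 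This is consistent with the paper's own last corollary of Section~1 (via Berestovskii--Nikonorov): in the compact setting --- which is assumed throughout the reduction you quote, since it uses compactness of $\mathrm{Isom}(M,g)$ --- the hypotheses of the present corollary can never be satisfied, so the ``same proof'' cannot actually be carried out in any nontrivial compact instance. A correct argument would have to either abandon compactness (and then rebuild the preliminary reduction from scratch) or use an entirely different construction of the metric.
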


Corollary 2 of \cite{bn2008} permits us to conclude the following.

\begin{corollary}
The characteristic foliation of  a compact $\mathfrak{g}$-manifold given by an isometric action with abelian $\mathfrak{g}$  cannot be modeled on  a manifold of negative Ricci curvature.
\end{corollary}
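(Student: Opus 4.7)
The plan is to argue by contradiction and combine the previous corollary with the cited Bochner-type obstruction from \cite{bn2008}. Suppose, to the contrary, that the characteristic foliation $\mathcal F$ of such a compact abelian $\mathfrak{g}$-manifold $(M,g)$ is modeled on a Riemannian manifold of negative Ricci curvature. The previous corollary then furnishes a bundle-like metric $g_\varphi$ on $M$ of everywhere negative Ricci curvature, obtained by the basic vertical warping $g_\varphi(w_1,w_2)=e^{2\varphi(x)}g(w_1^v,w_2^v)+g(w_1^h,w_2^h)$.

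The key intermediate step I would isolate is to check that the original locally free isometric $G$-action remains isometric for the warped metric $g_\varphi$. This uses three facts, all already in place: (i)~since $G$ acts on $(M,g)$ by isometries preserving the leaves, it preserves the vertical--horizontal splitting $TM=T{\mathcal F}\oplus {\mathcal H}$; (ii)~$g$ itself is $G$-invariant on each summand; and (iii)~the warping function $\varphi$ is basic, hence constant along the orbits and therefore $G$-invariant. Consequently, the infinitesimal generators $X_1,\dots,X_p$ of the abelian $\mathfrak{g}$-action remain Killing vector fields for $g_\varphi$, and since the action is locally free, these Killing fields are pointwise linearly independent.

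Now invoke Corollary~2 of \cite{bn2008}, which I read as a Bochner-type statement: a compact Riemannian manifold of negative Ricci curvature cannot carry a nontrivial Killing vector field (equivalently, its isometry group is finite, so in particular admits no locally free action of a positive-dimensional Lie group). Applying this to $(M,g_\varphi)$ with the nontrivial Killing fields $X_i$ yields the desired contradiction, and the theorem follows.

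The main obstacle I anticipate is the verification in the middle step. The warped metric construction from \cite{gw} is tailored to curvature estimates rather than to isometry-group preservation, so one needs to spell out why a basic choice of $\varphi$ guarantees that the vertical rescaling does not break $G$-invariance. Once that is confirmed, the argument is essentially a one-line application of \cite{bn2008} to the warped metric.
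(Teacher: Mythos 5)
Your argument is correct and follows essentially the route the paper intends: the paper gives no written proof beyond the one-line remark that Corollary~2 of \cite{bn2008} permits the conclusion, and combining the preceding corollary (the vertically warped bundle-like metric of negative Ricci curvature) with a Bochner-type non-existence result for Killing fields on compact negatively Ricci-curved manifolds is exactly what is meant. Your explicit check that the basic warping function keeps the locally free $G$-action isometric is a worthwhile detail the paper leaves implicit; the only caveat is that Corollary~2 of \cite{bn2008} is phrased for Killing fields of constant length, but the classical Bochner theorem already rules out the nontrivial Killing fields you produce, so the contradiction stands.
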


\section{Ricci solitons and Einstein metrics on almost $S$-manifolds}

We are interested in ${\mathfrak g}$-manifolds with some additional structures, manifolds with ``partial" almost complex structures and associated Riemannian metrics.

An $(2n+p)$-dimensional Riemannian manifold $(M,g)$ endowed with a tensor field $\varphi$ of type $(1,1)$,
unit vector fields $\xi_i\ (1\le i\le p)$ and $1$-forms $\eta_i$, is called an \textit{almost $S$-manifold} if these tensors satisfy
\begin{eqnarray}\label{2.1}
 \varphi^2 = -I + \sum\nolimits_{\,i}\eta^i\otimes\xi_i,\quad \eta^i = g(\cdot\,,\xi_i),\\
 \label{2.1b}
 d\eta^i(X,Y)=g(X,\varphi(Y)),\quad X,Y\in\mathfrak{X}_M,
\end{eqnarray}
see \cite{gy}. From \eqref{2.1} we get the equality $\varphi^3 +\varphi = 0$.
The $1$-forms $\eta^i$ are called \textit{contact forms}, and $\xi_i$ are \textit{Reeb vector fields}
and the 2-form $F(X,Y) = g(X,\varphi(Y))$ is called the Sasaki form.
The structure $(\varphi,\xi_i,\eta^i,g)$ on $M$ is called an \textit{almost $S$-structure},
and the manifold is denoted by $M^{2n+p}(\varphi,\xi_i,\eta^i,g)$.
It follows from \eqref{2.1} and \eqref{2.1b} that
\begin{eqnarray*}
 && \varphi(\xi_i) = 0,\quad
 \eta^i \circ \varphi = 0,\quad
 {\rm rank}(\varphi)=2\,n,\\
 && g(X, \varphi(Y)) = -g(\varphi(X), Y) ,\\
 && g(\varphi(X),\varphi(Y))= g(X,Y) -\sum\nolimits_{\,i}\eta^i(X)\,\eta^i(Y)
\end{eqnarray*}
for any $X,Y\in\mathfrak{X}_M$.
The tangent bundle of $M$ is split into the sum of subbundles as $TM={\cal D}\otimes \widetilde{\cal D}$, where ${\cal D}=\varphi(TM)$
is a $2n$-dimensional distribution and $\widetilde{\cal D}={\rm span}(\xi_1,\ldots,\xi_p)$.
The restriction of $\varphi$ to ${\cal D}$ determines a complex structure on it.
From \eqref{2.1} we obtain that the metric $g$ is compatible:
\[
 g(X,Y) = g(\varphi(X), \varphi(Y)) + \sum\nolimits_{\,i}\eta^i(X)\,\eta^i(Y).
\]
 A $\varphi$-structure on a manifold $M$ is called \textit{normal} if
\[
 N_{\varphi} + 2\sum\nolimits_{\,i} d\eta^i\otimes\xi_i =0,
\]
where
 ${N}_{\varphi}(X,Y):=\varphi^2([X,Y])+[\varphi(X), \varphi(Y)] - \varphi([\varphi(X), Y]) - \varphi([X, \varphi(Y)])$
is the \textit{Nijenhuis torsion} of $\varphi$. A~normal almost ${\cal S}$-manifold is called an ${\cal S}$-\textit{manifold}.
Note that the Sasaki form of an almost $S$-manifold is closed.

\begin{remark}\rm
If \eqref{2.1} are satisfied, and instead of \eqref{2.1b}, the Sasaki form is closed the manifold $M^{2n+p}(\varphi,\xi_i,\eta^i,g)$ is called an \textit{almost} K-\textit{manifold}. A~normal almost K-manifold is called a K-\textit{manifold}.
These structures are higher dimensional versions of K-contact and Sasaki structures (or manifolds).
The main difference is that the characteristic foliation of a K-contact  manifold is totally geodesic, while in the Sasakian case it is an isometric flow.
The same is true for the higher dimensional case.  The characteristic foliation of an almost K-manifold is totally geodesic, in fact,
$\nabla_{\xi_i}\xi_j=0$, and for K-manifold the vector fields $\xi_i$ are Killing, therefore, the characteristic foliation is also Riemannian, see~\cite{b1970}.
 For the relation between K-manifolds and transversely K\"{a}hlerian foliations given by an action of an abelian Lie algebra, see \cite{dkpw00,dkw07}.
\end{remark}

When we work with a fixed base $\{\xi_1,\ldots,\xi_p\}$ of ${\mathfrak g},$ then let
\[
 h_i = \frac12\,{\cal L}_{\xi_i}\,\varphi.
\]

\noindent
 In addition to the D.E.~Blair paper, \cite{b1970}, in several other papers important tensorial equalities for almost ${\cal S}$-{manifolds} have been demonstrated, cf. \cite{cff90,dip01}.
 In particular, see \cite[Propositions~3.3 and 3.4]{BP2008},
 the operators $h_i$ are self-adjoint, anti-commute with $\varphi$ and for any $i,j=1,\ldots,p$,
\begin{subequations}
\begin{eqnarray}\label{E-four-1}
 && h_i\,\xi_j =0,\\
\label{E-four-2}
 && \nabla_X\,\xi_i = -\varphi(X) - \varphi(h_i X), \quad X\in TM,\\
\label{E-four-3}
 && \nabla_{\xi_i}\,\varphi =0,\\
\label{E-four-5}
 && h_i\circ\varphi=-\varphi\circ h_i,\\
\label{E-sol-02}
 && \tr h_i = 0,\quad \tr (\varphi\circ h_i) = 0.
\end{eqnarray}
\end{subequations}
From (\ref{E-four-1},b) we have $\nabla_{\xi_i}\,\xi_j = 0$.
The second equality in \eqref{E-sol-02} follows from \eqref{E-four-5}.

A complete Riemannian manifold $(M,g)$ with a vector field $X$ is called a \textit{Ricci soliton} if
\begin{equation}\label{E-sol-01}
 \frac12\,{\cal L}_X\,g + \Ric +\,\lambda\,g = 0
\end{equation}
for some $\lambda\in\RR$ and $X\in\mathfrak{X}_M$.
 If the constant ${\lambda} >0$ it is called shrinking, steady if ${\lambda} =0$, and expanding if ${\lambda} <0$.
 If~the vector field $X$ is the gradient of some function $f, X = \nabla f,$ then it is called a gradient
 Ricci soliton and it satisfies the equation
\[
 \Ric + \,{\rm Hess}\,f = \lambda\,g.
\]
In \cite{pw} the authors demonstrated, cf. Corollary 2.2, where $\nabla$ is the Levi-Civita connection of $g$.

\begin{theorem} If $Z$ is a Killing vector field on a gradient soliton $(M,g,f)$, then either $\nabla_Z f=0$
or we have an isometric splitting $M=N\times\RR$ where $N$ is a gradient soliton with the same radial curvature as $M$.
\end{theorem}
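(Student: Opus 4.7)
The plan is to reduce the theorem to the identity $\operatorname{Hess}(Zf) = 0$ on $M$ and then to deduce the dichotomy by a de Rham type splitting. First I would exploit that $Z$ is Killing: its flow consists of isometries, so $\mathcal{L}_Z g = 0$, $\mathcal{L}_Z \Ric = 0$, and $\mathcal{L}_Z$ commutes with the Levi-Civita connection on tensor fields. Applying $\mathcal{L}_Z$ to the gradient soliton equation $\Ric + \operatorname{Hess} f = \lambda g$ therefore annihilates two of the three terms and commutes past the covariant derivative in $\operatorname{Hess} f = \nabla df$, giving
\[
 \operatorname{Hess}(Zf) \,=\, \mathcal{L}_Z(\operatorname{Hess} f) \,=\, 0.
\]
Hence the function $u := Zf = \nabla_Z f$ is affine on $(M,g)$, and $\nabla u$ is a parallel vector field.

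Next I would branch on whether $\nabla u$ vanishes. A parallel field on the connected manifold $M$ has constant norm, so it is either identically zero or nowhere zero. In the first case, $u$ is a constant, and I would argue --- using completeness of $M$ and the global structure of a gradient Ricci soliton, for instance by testing at a critical point of $f$ or via a Stokes-type identity --- that this constant must be $0$, yielding the first alternative $\nabla_Z f = 0$. In the second case, $\nabla u$ is a nowhere vanishing parallel vector field, so de Rham's decomposition theorem, applied to the complete $(M,g)$, produces an isometric splitting $M = N \times \RR$ whose $\RR$-factor is tangent to $\nabla u$.

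It then remains to transfer the gradient soliton structure to $N$ and to verify the radial curvature claim. Since $\partial_t := \nabla u / |\nabla u|$ is parallel and $u$ is affine in the $\RR$-coordinate, the potential decomposes as $f(x,t) = f_N(x) + a t + b$ for constants $a, b$. Substituting into $\Ric + \operatorname{Hess} f = \lambda g$ and using that the Ricci tensor of a Riemannian product respects the splitting, the equation on $N$ reduces to $\Ric_N + \operatorname{Hess} f_N = \lambda g_N$, so $N$ is a gradient soliton with potential $f_N$. Because $\nabla f = \nabla_N f_N + a\,\partial_t$ and $\partial_t$ is parallel, the sectional curvatures of planes containing $\nabla f$ coincide with the corresponding curvatures on $N$, which is the radial curvature statement. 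The main obstacle, in my view, is not the Hessian calculation or the product analysis --- both essentially routine --- but the elimination of the subcase $\nabla u \equiv 0$ with $u \not\equiv 0$, which requires global soliton information beyond the local identity above.
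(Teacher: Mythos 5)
First, a point of reference: the paper does not prove this statement at all --- it is quoted verbatim from Petersen and Wylie \cite{pw} (their Corollary~2.2) --- so there is no in-text argument to compare against, only the cited one. Your opening step reproduces exactly the lemma on which that cited proof rests, and it is correct: for a Killing field $Z$ one has ${\cal L}_Z g=0$, ${\cal L}_Z\Ric=0$ and ${\cal L}_Z\nabla=0$, hence ${\rm Hess}(\nabla_Z f)={\cal L}_Z({\rm Hess}\,f)={\cal L}_Z(\lambda g-\Ric)=0$, so $u:=\nabla_Z f$ has parallel gradient. The branch in which $\nabla u$ is nowhere zero is also essentially fine: a nonconstant function with vanishing Hessian on a complete manifold forces $M=N\times\RR$ (strictly speaking de Rham's theorem requires simple connectivity, but the globally defined affine function $u$ yields the splitting directly via the flow of $\nabla u/|\nabla u|^2$).

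The genuine gap is the one you flag yourself, and it is worse than an unfinished step: the assertion you propose to establish there is false. If $\nabla u\equiv 0$ then $u=\nabla_Z f$ is constant, but that constant need not vanish. Take the steady Gaussian soliton $M=\RR^n$ with the flat metric and $f(x)=\langle a,x\rangle$, so $\Ric={\rm Hess}\,f=0$ and $\lambda=0$, and let $Z=\partial_1$ be a translation field; then $\nabla_Z f=a_1$ is a nonzero constant, $f$ has no critical point, and no Stokes-type identity is available since $M$ is complete but not compact (on a compact manifold $\int_M Zf\,d\vol=0$ because $\Div Z=0$, but the solitons in question are noncompact). The theorem survives in this example because $M$ splits anyway --- yet not along $\nabla u$, which is identically zero --- so in the subcase $\nabla_Z f\equiv c\neq 0$ the correct task is to produce the splitting by a different mechanism, not to rule the subcase out; your proposed route cannot be repaired as stated. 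A secondary slip: on the product $N\times\RR$ the potential is not affine in $t$ in general. The vanishing of the mixed components of $\Ric+{\rm Hess}\,f-\lambda g$ gives $f(x,t)=f_N(x)+\frac{\lambda}{2}\,t^2+at+b$, and only then do the remaining components reduce to $\Ric_N+{\rm Hess}\,f_N=\lambda\,g_N$; your formula $f=f_N+at+b$ is valid only for $\lambda=0$.
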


Applying the above theorem a finite number of times we get the following

\begin{theorem}
Let $(M,g,f)$ be endowed with a ${\mathfrak{g}}$-foliation. Then $(M,g,f)$ is isometric to a gradient Riemannian soliton
$(N \times \RR^s,\, g_1\times g_0, \bar{f}) $ where $(N,g_1,\bar{f})$ is a gradient Ricci soliton and $N$ is a $\bar{\mathfrak{g}}$-manifold,
for which the function $\bar{f}$ is basic for a foliation of dimension $p-s$.
\end{theorem}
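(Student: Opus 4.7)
The plan is to prove the statement by induction on $p=\dim\mathfrak{g}$, using the previously stated splitting theorem as the inductive step. The $\mathfrak{g}$-foliation on $(M,g,f)$ comes equipped with generators $\xi_1,\ldots,\xi_p$ spanning $\mathfrak{g}$; in the setting preceding the theorem (isometric $\mathfrak{g}$-action, or K-manifold context) each $\xi_i$ is a Killing field on $(M,g)$. Applying the previous theorem to $\xi_1$ gives a dichotomy: either $\nabla_{\xi_1}f=0$, in which case $\xi_1$ is retained in the residual structure, or one has an isometric splitting $(M,g,f)\cong (M_1\times\RR,\,g_1\times g_0,\,f_1)$ with $\xi_1$ tangent to the $\RR$-factor and $(M_1,g_1,f_1)$ a gradient Ricci soliton of the same radial curvature.

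In the splitting case, the remaining Killing fields $\xi_2,\ldots,\xi_p$ have to be reinterpreted on $M_1$. The key observation is that any Killing vector field on a Riemannian product $M_1\times\RR$ that commutes with $\partial_t$ (guaranteed in the abelian case, and achievable up to adjusting the basis in general) decomposes as $\xi_j=\xi_j^{M_1}+a_j\,\partial_t$ with $\xi_j^{M_1}$ Killing on $M_1$ and $a_j$ constant. Subtracting appropriate multiples of the already-split generator $\xi_1$ from the rest of the basis produces $p-1$ Killing fields on $M_1$ generating a residual $\bar{\mathfrak{g}}$-action, so that the induction hypothesis applies to $(M_1,g_1,f_1)$.

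Iterating, one collects an integer $0\le s\le p$ of $\RR$-factors (one for each generator whose dichotomy gave the non-trivial alternative) and obtains the claimed isometric splitting $M\cong N\times\RR^s$, with $(N,g_1,\bar f)$ a gradient Ricci soliton on which the remaining $p-s$ Killing generators annihilate $\bar f$; this last property is precisely the basicity of $\bar f$ with respect to the residual $(p-s)$-dimensional foliation. The main technical obstacle is bookkeeping across successive splittings: one must check that at each stage the orthogonal decomposition of the previous step is compatible with the orthogonal decomposition of the next, so that after $s$ iterations the $\RR^s$-factor carries the flat product metric $g_0$ and the residual generators on $N$ genuinely close under the bracket to give a well-defined $\bar{\mathfrak{g}}$. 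In the abelian case this bookkeeping is essentially trivial, which is the setting principally addressed in the paper; for a general $\mathfrak{g}$ one additionally needs that the subalgebra of $\mathfrak{g}$ spanned by the "splitting" directions is an ideal, whose quotient then plays the role of $\bar{\mathfrak{g}}$.
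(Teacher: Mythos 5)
Your proposal follows exactly the paper's route: the paper's entire argument is the single remark that the theorem follows by ``applying the above theorem [the Petersen--Wylie splitting dichotomy for Killing fields on gradient solitons] a finite number of times.'' Your write-up is correct and in fact supplies more detail than the paper does, particularly the bookkeeping of how the remaining Killing generators descend to the split-off factor at each stage.
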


\begin{corollary}
If a compact $\mathfrak{g}$-manifold is a gradient Ricci soliton for a function $f$, then $f$ is a basic function.
\end{corollary}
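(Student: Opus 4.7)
The proof plan is straightforward: invoke the preceding theorem and use compactness to eliminate the Euclidean factor. Specifically, applying the theorem immediately above to the gradient Ricci soliton $(M,g,f)$ with its characteristic $\mathfrak{g}$-foliation yields an isometric splitting
\[
 (M,g,f) \;\cong\; (N\times\RR^s,\, g_1\times g_0,\, \bar f),
\]
where $(N,g_1,\bar f)$ is a gradient Ricci soliton, $N$ is a $\bar{\mathfrak{g}}$-manifold, and $\bar f$ is basic for the residual characteristic foliation on $N$ of dimension $p-s$.

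Next, I would use the assumption that $M$ is compact. Compactness of $M$ forces compactness of $N\times\RR^s$, but $\RR^s$ fails to be compact as soon as $s\ge 1$. Hence $s=0$, so the splitting is trivial on the Euclidean side: $M=N$, $g=g_1$, $f=\bar f$, and the $\bar{\mathfrak{g}}$-foliation on $N$ coincides with the original $p$-dimensional characteristic foliation on $M$. Since $\bar f$ is basic for that foliation by the theorem, $f$ itself is basic, which is the desired conclusion.

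Unwinding what is really at work, each of the $p$ Reeb Killing vector fields $\xi_i$ is fed into the Petersen--Wylie dichotomy (the theorem attributed to \cite{pw} above): either $\nabla_{\xi_i}f=0$, or one splits off an isometric $\RR$ factor. Compactness rules out the second alternative at every step, so $\xi_i(f)=0$ for all $i=1,\dots,p$, which is precisely the statement that $f$ is basic. The only potential obstacle I anticipate is purely bookkeeping, namely checking that the iteration in the preceding theorem may be applied to all $p$ Killing fields simultaneously (rather than losing track of the $\mathfrak g$-structure after the first factor is peeled off); but this is already absorbed into the statement of that theorem, so no additional work is needed here.
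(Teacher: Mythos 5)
Your proposal is correct and matches the paper's (implicit) intended argument: the corollary is stated as an immediate consequence of the preceding splitting theorem, and the only extra input is that compactness of $M$ forces $s=0$ in the factor $\RR^s$, so that $f=\bar f$ is basic for the full $p$-dimensional characteristic foliation. Your unwinding via the Petersen--Wylie dichotomy ($\nabla_{\xi_i}f=0$ for each $i$ since no Euclidean factor can split off a compact manifold) is exactly the mechanism the paper relies on.
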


For an ${\cal S}$-manifold, $\xi_i$ are Killing vector fields and the distribution ${\cal D}$ is totally geodesic.
The following lemma generalizes this fact.
Let $H$ be the mean curvature vector field of ${\cal D}$, then
\[
 \Div\xi_\alpha=-\<H,\xi_\alpha\>,\quad
 H=\sum\nolimits_{\,\alpha} \<H,\xi_\alpha\>\,\xi_\alpha.
\]

\begin{lemma}\label{L-H=0} For an almost ${\cal S}$-manifold we have
$\Div\xi_\alpha=0$ and $H=0$, i.e., the distribution ${\cal D}$ is harmonic.
\end{lemma}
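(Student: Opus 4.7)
The plan is to compute $\Div\xi_\alpha$ directly from the structural formula (\ref{E-four-2}) and deduce $H=0$ from the given identity $H=\sum_\alpha \<H,\xi_\alpha\>\,\xi_\alpha=-\sum_\alpha(\Div\xi_\alpha)\,\xi_\alpha$.

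First I would pick a local orthonormal frame $\{e_k\}$ of $TM$ and write
\[
 \Div\xi_\alpha \;=\; \sum\nolimits_{k} g(\nabla_{e_k}\xi_\alpha,\,e_k).
\]
Substituting (\ref{E-four-2}), namely $\nabla_{e_k}\xi_\alpha=-\varphi(e_k)-\varphi(h_\alpha e_k)$, this becomes
\[
 \Div\xi_\alpha \;=\; -\tr\varphi \;-\; \tr(\varphi\circ h_\alpha).
\]

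Next I would argue that both traces vanish. The second vanishes immediately by the second identity in (\ref{E-sol-02}). The first vanishes because $\varphi$ is skew-adjoint with respect to $g$ (i.e.\ $g(X,\varphi Y)=-g(\varphi X,Y)$, stated in the list of consequences of (\ref{2.1})--(\ref{2.1b})), so $g(\varphi e_k,e_k)=-g(e_k,\varphi e_k)$ forces each diagonal entry, and hence the trace, to be zero. This gives $\Div\xi_\alpha=0$ for every $\alpha$.

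Finally, using the formula $H=\sum_\alpha \<H,\xi_\alpha\>\,\xi_\alpha$ together with $\<H,\xi_\alpha\>=-\Div\xi_\alpha=0$ displayed just before the lemma, we conclude $H=0$, so $\mathcal D$ is harmonic. I do not anticipate any serious obstacle: the whole argument is a one-line trace computation leveraging the skew-symmetry of $\varphi$ and the known trace identity for $\varphi\circ h_\alpha$. The only point worth double-checking is the sign convention in (\ref{E-four-2}) and in the relation between $\Div\xi_\alpha$ and $\<H,\xi_\alpha\>$, both of which are already fixed by the preceding text.
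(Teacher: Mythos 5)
Your proposal is correct and follows essentially the same route as the paper: substitute \eqref{E-four-2} into the trace defining the divergence (resp.\ $\<H,\xi_\alpha\>$) and kill both resulting traces via skew-symmetry of $\varphi$ and the identity $\tr(\varphi\circ h_\alpha)=0$ from \eqref{E-sol-02}. The only cosmetic difference is that the paper traces over a frame of ${\cal D}$ to get $\<H,\xi_\alpha\>$ directly, while you trace over all of $TM$ and then invoke $\<H,\xi_\alpha\>=-\Div\xi_\alpha$; the extra terms in your sum vanish since $\varphi(\xi_j)=h_\alpha\xi_j=0$, so the computations coincide.
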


\begin{proof}
Using (\ref{E-four-2},d) for a local orthonormal frame $(E_i)$ of ${\cal D}$, we have
\begin{equation*}
 \<H,\xi_\alpha\>=\sum\nolimits_{\,i}\, g(\nabla_{E_i}\,\xi_\alpha,\,E_i) = -\tr\,\varphi -\tr(\varphi\circ h_\alpha) = 0 ,
\end{equation*}
thus the claim follows.
\end{proof}

Define $\bar\eta=\sum_{\,i}\,\eta^i$ and $\bar\xi=\sum_{\,i}\,\xi_i$.

\begin{lemma} For an almost ${\cal S}$-manifold we have
\begin{subequations}
\begin{eqnarray}\label{Equ-1-2}
 \Div \varphi \eq -2\,n\,\bar\eta,\\
\label{Equ-1-3}
 \Ric(\xi_i, X) \eq -\nabla^*\nabla\,\xi_i + 2\,n\,\bar\eta(X).
\end{eqnarray}
\end{subequations}
\end{lemma}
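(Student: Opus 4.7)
The plan is to prove both identities by direct tensor calculation in a local orthonormal frame $\{E_A\}_{A=1}^{2n+p}$ adapted to the splitting $TM={\cal D}\oplus\widetilde{\cal D}$, taking $E_a=e_a$ for $1\le a\le 2n$ to span ${\cal D}$ and $E_{2n+\alpha}=\xi_\alpha$ for $1\le\alpha\le p$, and carrying out the computations at a fixed point where the frame is normal, so $\nabla_{E_A}E_B=0$ there. I will freely use the relations (\ref{E-four-1})--(\ref{E-sol-02}) together with the compatibility identity $g(\varphi V,\varphi W)=g(V,W)-\sum_j \eta^j(V)\eta^j(W)$.

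For \eqref{Equ-1-2}, I read $\Div\varphi$ as the 1-form $(\Div\varphi)(X)=\sum_A g((\nabla_{E_A}\varphi)X,E_A)$. I evaluate it on $\widetilde{\cal D}$ and on ${\cal D}$ separately. Since $\varphi\xi_i=0$ and $\varphi$ is $g$-skew, for $X=\xi_i$,
\[
 (\Div\varphi)(\xi_i)=-\sum_A g(\varphi(\nabla_{E_A}\xi_i),E_A)=\sum_A g(\nabla_{E_A}\xi_i,\varphi E_A).
\]
Substituting \eqref{E-four-2} and expanding via $g(\varphi V,\varphi W)=g(V,W)-\sum_j\eta^j(V)\eta^j(W)$, the $\varphi$-free piece collapses to $-((2n+p)-p)=-2n$, and the $h_i$-piece vanishes because $\tr h_i=0$ by \eqref{E-sol-02} and $h_i\xi_j=0$ by \eqref{E-four-1}. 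For $X\in{\cal D}$, $\nabla_{\xi_\alpha}\varphi=0$ by \eqref{E-four-3} kills the $\widetilde{\cal D}$ terms, and a parallel unpacking of $\sum_a g((\nabla_{e_a}\varphi)X,e_a)$ again reduces to traces of $h_i$ and $\varphi h_i$ on ${\cal D}$, both zero by \eqref{E-sol-02}, matching $\bar\eta(X)=0$.

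For \eqref{Equ-1-3} (reading $-\nabla^*\nabla\xi_i$ as the metric dual 1-form $-g(\nabla^*\nabla\xi_i,\cdot)$), I invoke the Weitzenbock identity for $1$-forms
\[
 \Delta_H\,\eta^i=\nabla^*\nabla\,\eta^i+\Ric(\xi_i,\cdot),
\]
with $\Delta_H=dd^*+d^*d$. Lemma~\ref{L-H=0} gives $d^*\eta^i=-\Div\xi_i=0$, so $\Delta_H\eta^i=d^*d\eta^i$. By \eqref{2.1b} every $d\eta^i$ equals the Sasaki $2$-form $F(X,Y)=g(X,\varphi Y)$, and in a normal frame
\[
 (d^*F)(X)=-\sum_A(\nabla_{E_A}F)(E_A,X)=-\sum_A g((\nabla_{E_A}\varphi)X,E_A)=-(\Div\varphi)(X)=2n\,\bar\eta(X)
\]
by \eqref{Equ-1-2}. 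Substituting $\Delta_H\eta^i=2n\bar\eta$ back and rearranging the Weitzenbock formula gives \eqref{Equ-1-3}.

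The main obstacle is sign bookkeeping in \eqref{Equ-1-2}: the $(1,1)$-tensor divergence admits two natural conventions that differ by a sign when $\varphi$ is $g$-skew, and one must cleanly separate the genuine Riemannian dimension count $2n$ from the many $h_i$- and $\varphi h_i$-traces that conspire to vanish by \eqref{E-four-1}, \eqref{E-four-5}, \eqref{E-sol-02}. Once \eqref{Equ-1-2} is in hand the Weitzenbock step for \eqref{Equ-1-3} is routine, modulo verifying that every $d\eta^i$ collapses to the single Sasaki form $F$ so that $d^*d\eta^i$ is independent of $i$ and equals $-\Div\varphi$.
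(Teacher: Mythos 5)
Your evaluation of $\Div\varphi$ on $\ker\varphi$ is correct and complete, but the other half of \eqref{Equ-1-2} --- the vanishing of $(\Div\varphi)(X)$ for $X\in{\cal D}$ --- is only asserted, and this is where the real difficulty sits. You claim that $\sum_a g((\nabla_{e_a}\varphi)X,e_a)$, summed over a frame $(e_a)$ of ${\cal D}$, ``reduces to traces of $h_i$ and $\varphi\circ h_i$'', but none of the identities \eqref{E-four-1}--\eqref{E-sol-02} gives any control on $\nabla_{e_a}\varphi$ for $e_a\in{\cal D}$: they determine $\nabla\xi_i$ and $\nabla_{\xi_i}\varphi$ only, and $h_i=\frac12{\cal L}_{\xi_i}\varphi$ measures the variation of $\varphi$ along $\widetilde{\cal D}$, not along ${\cal D}$. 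The paper closes exactly this gap by a different route: it imports the full covariant-derivative formula $(\nabla_X\varphi)Y=g(\varphi(X),\varphi(Y))\,\bar\xi+\bar\eta(Y)\,\varphi^2(X)$ from \cite{BP2008} and takes its trace, which yields both the $-2n$ on $\widetilde{\cal D}$ and the vanishing on ${\cal D}$ in one stroke. Without that formula (or an equivalent expression for $\nabla\varphi$ extracted from the closedness of the Sasaki form), your horizontal computation cannot be completed, so the proof of \eqref{Equ-1-2} has a genuine hole precisely where the almost ${\cal S}$-structure must enter.

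For \eqref{Equ-1-3} your Weitzenb\"ock route is legitimate in outline (and is essentially the content of the Tanno identity that the paper simply cites), but it hides a factor-of-two convention clash that you must resolve. Given \eqref{E-four-2} one computes $X\eta^i(Y)-Y\eta^i(X)-\eta^i([X,Y])=2\,g(X,\varphi(Y))$, so \eqref{2.1b} is consistent with \eqref{E-four-2} only under the contact-geometry convention $d\eta(X,Y)=\frac12\big(X\eta(Y)-Y\eta(X)-\eta([X,Y])\big)$; the exterior derivative appearing in the Hodge Laplacian $dd^*+d^*d$ of the Weitzenb\"ock identity is the unnormalized one, for which $d\eta^i=2F$ and hence $d^*d\eta^i=2\,d^*F=-2\Div\varphi=4n\,\bar\eta$ rather than $2n\,\bar\eta$. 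A direct check on a Sasakian manifold (where $\Ric(\xi,X)=2n\,\eta(X)$ and, $\xi$ being Killing, $\nabla^*\nabla\xi=2n\,\xi$) confirms that $\Ric(\xi,X)+g(\nabla^*\nabla\xi,X)=4n\,\eta(X)$. As written, your step ``$\Delta_H\eta^i=d^*F$'' silently identifies the two conventions, so either the coefficient you obtain or the normalization of $d$ and $\nabla^*\nabla$ must be fixed before the argument matches the stated formula.
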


\begin{proof}
For \eqref{Equ-1-2} we apply divergence to \cite[Eqn.~(5)]{BP2008}:
\[
 (\nabla_X\,\varphi)Y=g(\varphi(X),\varphi(Y))\,\bar\xi+\bar\eta(Y)\,\varphi^2(X).
\]
For \eqref{Equ-1-3} see \cite[Eqn.~(2.6)] {tan}, where we use Lemma~\ref{L-H=0}.
\end{proof}

\begin{proposition}
On an almost ${\cal S}$-manifold, we have
\begin{eqnarray}\label{E-sol-03b}
 (\Div(h_i\circ\varphi)) X = \Ric(\xi_i, X) -2\,n\,\bar\eta(X).
\end{eqnarray}
\end{proposition}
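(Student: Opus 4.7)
My plan is to turn the claim into an identity for $\nabla\xi_i$ and then reduce it to a standard Bochner-type computation.

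First, I would use \eqref{E-four-2} to write $\nabla_X\xi_i = -\varphi(X) - \varphi(h_i X)$ and then apply the anti-commutation \eqref{E-four-5}, $\varphi\circ h_i = -h_i\circ\varphi$, to obtain
$$
 \nabla_X\xi_i \;=\; -\varphi(X) + h_i(\varphi(X)),
$$
so that, viewed as $(1,1)$-tensor fields,
$$
 h_i\circ\varphi \;=\; \nabla\xi_i + \varphi.
$$
Taking the divergence of both sides and using \eqref{Equ-1-2}, one gets
$$
 (\Div(h_i\circ\varphi))(X) \;=\; (\Div(\nabla\xi_i))(X) \,-\, 2n\,\bar\eta(X),
$$
so it suffices to prove that $(\Div(\nabla\xi_i))(X) = \Ric(\xi_i,X)$.

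For this I would invoke the following Bochner-type identity: for any vector field $V$ on a Riemannian manifold,
$$
 (\Div(\nabla V))(X) \;=\; X(\Div V) + \Ric(V, X).
$$
This is verified in the standard way by fixing a local orthonormal frame $\{E_j\}$, expanding $(\Div(\nabla V))(X) = \sum_j g((\nabla_{E_j}\nabla V)(X), E_j)$, applying the Ricci identity $\nabla_{E_j}\nabla_X V - \nabla_X\nabla_{E_j}V - \nabla_{[E_j,X]}V = R(E_j,X)V$, and noting that the residual cross terms involving $g(\nabla_X E_j, E_k)$ are skew in $j,k$ and therefore cancel against the symmetric combination they are paired with. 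Applied to $V=\xi_i$, Lemma~\ref{L-H=0} gives $\Div\xi_i=0$, so $(\Div(\nabla\xi_i))(X) = \Ric(\xi_i,X)$, and combining with the identity above yields the claim.

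The only delicate point is the bookkeeping of conventions: one must use the same sign convention for the divergence of a $(1,1)$-tensor that was used in the proof of \eqref{Equ-1-2}, namely $(\Div T)(X)=\sum_j g((\nabla_{E_j}T)(X),E_j)$. Once that is fixed, the argument is direct; there is no real geometric obstacle, only a careful check of signs in the Ricci-identity manipulation. An essentially equivalent alternative is to invoke \eqref{Equ-1-3} and identify $(\Div(\nabla\xi_i))(X)$ with $-g(\nabla^{*}\nabla\xi_i,X)$ when $\Div\xi_i=0$, but the route above seems shorter and more self-contained.
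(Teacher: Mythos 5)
Your proof is correct and follows essentially the same route as the paper: both rest on rewriting \eqref{E-four-2} as $h_i\circ\varphi=\nabla\xi_i+\varphi$, applying \eqref{Equ-1-2} to the $\varphi$-term, and converting $\Div(\nabla\xi_i)$ into $\Ric(\xi_i,\cdot)$ using $\Div\xi_i=0$ from Lemma~\ref{L-H=0}. The only difference is that you verify the Bochner-type commutation identity $(\Div(\nabla V))(X)=X(\Div V)+\Ric(V,X)$ directly, whereas the paper outsources exactly this step to the cited formula \eqref{Equ-1-3}; your version is more self-contained but not a different argument.
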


\begin{proof} Using \eqref{E-four-2} and (\ref{Equ-1-2},b), we obtain \eqref{E-sol-03b},
see also \cite[p.~838]{bs1990} in coordinate notation.
\end{proof}

We generalize result of \cite{cs-2010}.

\begin{theorem}\label{T-05}
Let $(M,g)$ be a compact almost ${\cal S}$-manifold such that $g$ is a Ricci soliton with a nonzero potential vector field $X$ tangent to
$\ker\varphi$. Then $(M,g)$ is an Einstein manifold.
\end{theorem}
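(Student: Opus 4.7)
The plan is to prove that $X$ is a Killing vector field; once $\mathcal{L}_Xg\equiv 0$, the soliton equation~\eqref{E-sol-01} reduces to $\Ric=-\lambda g$, which is the Einstein condition.

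Since $X\in\ker\varphi=\widetilde{\mathcal D}$, expand $X=\sum_\alpha f_\alpha\xi_\alpha$ with $f_\alpha=\eta^\alpha(X)$. Combining~\eqref{E-four-2}, $\nabla_{\xi_\alpha}\xi_\beta=0$, the anti-commutation~\eqref{E-four-5}, the equality $h_\alpha\xi_\beta=0$, and the self-adjointness of $\varphi\circ h_\alpha$, a direct calculation gives
\[
(\mathcal{L}_Xg)(Y,Z)=\sum_\alpha\bigl[(Yf_\alpha)\,\eta^\alpha(Z)+(Zf_\alpha)\,\eta^\alpha(Y)\bigr]-2\sum_\alpha f_\alpha\,g(\varphi h_\alpha Y,Z).
\]
Plugging this into~\eqref{E-sol-01} and splitting by $TM=\mathcal D\oplus\widetilde{\mathcal D}$ produces three component identities; using~\eqref{E-sol-03b}, the mixed component reads $Z(f_i)+2\,\Div(h_i\circ\varphi)(Z)=0$ for $Z\in\mathcal D$, while the $\widetilde{\mathcal D}\times\widetilde{\mathcal D}$-part is $\Ric(\xi_i,\xi_j)+\tfrac12(\xi_if_j+\xi_jf_i)+\lambda\delta_{ij}=0$, and the $\mathcal D\times\mathcal D$-part is $\Ric(Y,Z)-\sum_\alpha f_\alpha g(\varphi h_\alpha Y,Z)+\lambda g(Y,Z)=0$.

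The decisive step exploits compactness. Using~\eqref{E-four-2} one computes $|\nabla\xi_i|^2=2n+\tr(h_i^2)$; applying~\eqref{Equ-1-3} and integrating by parts (with $\Div\xi_i=0$ from Lemma~\ref{L-H=0}, so $\int\xi_i(f_j)\,dV=0$) then gives
\[
\int_M\Ric(\xi_i,\xi_j)\,dV=-\int_M\tr(h_ih_j)\,dV.
\]
Integrating the vertical component identity yields $\int_M\tr(h_ih_j)\,dV=\lambda\,\delta_{ij}\,\vol(M)$; in particular $\lambda\ge 0$. Multiplying the mixed identity by $f_i$ and integrating, then using the self-adjointness of $\varphi h_i$ together with $\tr(\varphi h_i)=0$, collapses the divergence contribution and forces $df_i|_{\mathcal D}\equiv 0$ together with $\sum_\alpha f_\alpha\,\varphi h_\alpha\equiv 0$. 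Combined with the off-diagonal $\widetilde{\mathcal D}\times\widetilde{\mathcal D}$ relation, this makes $\mathcal{L}_Xg\equiv 0$, so $X$ is Killing and~\eqref{E-sol-01} yields $\Ric=-\lambda g$.

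The main obstacle is this last compactness step: the integral identities must combine delicately so that all three summands of $\mathcal{L}_Xg$ vanish pointwise, not merely in mean. The algebraic identities $\tr(\varphi h_i)=0$ and the self-adjointness of $\varphi h_i$ (following from \eqref{E-four-5}--\eqref{E-sol-02}), together with $\Div\xi_i=0$, are essential for the divergence contributions to collapse, mirroring the $p=1$ argument of~\cite{cs-2010}.
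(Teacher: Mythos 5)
Your setup is sound and parallels the paper's own starting point: writing $X=\sum_\alpha f_\alpha\xi_\alpha$ and computing $\mathcal{L}_Xg$ from \eqref{E-four-2} and \eqref{E-four-5} reproduces the paper's identity \eqref{E-sol-04}, and your component equations (mixed part $Z(f_i)+2\,\Div(h_i\circ\varphi)(Z)=0$, vertical part $\Ric(\xi_i,\xi_j)+\tfrac12(\xi_if_j+\xi_jf_i)+\lambda\delta_{ij}=0$) are correct consequences of it together with \eqref{E-sol-03b}. The genuine gap is in your ``decisive step.'' All the integral identities you extract are obtained by integrating the soliton equation \emph{once}: they tell you that $\int_M\xi_i(f_j)\,dV=0$ and that $\int_M\tr(h_ih_j)\,dV=\lambda\,\delta_{ij}\vol(M)$, i.e.\ they control the relevant quantities only \emph{in mean}. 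You then assert that multiplying the mixed identity by $f_i$, integrating, and using $\tr(\varphi h_i)=0$ ``forces'' $df_i|_{\mathcal D}\equiv0$ and $\sum_\alpha f_\alpha\varphi h_\alpha\equiv0$ pointwise. No sign-definite integrand is exhibited that would justify this: contracting the mixed identity with $f_i$ only gives $\tfrac12 Z(|X|^2)=-2\Ric(X,Z)$ for $Z\in\mathcal D$, and $\Ric(X,\cdot)|_{\mathcal D}$ has no sign. Likewise, even granting $df_i|_{\mathcal D}=0$, the diagonal vertical relation $\xi_if_i=-(\Ric(\xi_i,\xi_i)+\lambda)$ is only known to vanish after integration, not pointwise, so $\mathcal{L}_Xg\equiv0$ does not follow.

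What is missing is precisely the paper's second differentiation: one must differentiate \eqref{E-sol-04} along $Z$, contract using \eqref{E-sol-03b} and the contracted trace \eqref{E-sol-05}, and substitute $Y=\xi_j$ to arrive at the \emph{pointwise} elliptic identity $\Delta u^j=-2\,u^i\tr(h_i\circ h_j)$ \eqref{E-sol-08}. This yields $\Delta\big(\sum_j(u^j)^2\big)=-2\sum_j|\nabla u^j|^2-4\,u^iu^j\tr(h_ih_j)\le0$, a genuinely sign-definite expression to which the Divergence Theorem applies, giving $u^j$ constant and $u^ih_i=0$, hence $X$ Killing. Without producing such a subharmonicity (or some other sign-definite) statement, first-order integral identities cannot be upgraded to the pointwise vanishing you need. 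A secondary warning sign: your identity $\int_M\Ric(\xi_i,\xi_j)\,dV=-\int_M\tr(h_ih_j)\,dV$, derived from \eqref{Equ-1-3}, is inconsistent with the pointwise formula $\Ric(\xi_i,\xi_j)=2n-\tr(h_i\circ h_j)$ in \eqref{E-sol-03} (the standard Blair-type formula); the resulting conclusion $\lambda\ge0$ therefore rests on a sign convention that should be checked rather than leaned on.
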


\begin{proof}
Substituting $X=u^i\xi_i$ (the summation sign is omitted) in \eqref{E-sol-01}, and using \eqref{E-sol-02}, we obtain
\begin{equation}\label{E-sol-04}
 Y(u^i)\xi_i +\<\xi_i,Y\>\nabla u^i +2\,u^i h_i\,\varphi(Y) + 2\Ric(Y) + 2\,\lambda\,Y =0 ,
\end{equation}
where $Y$ is an arbitrary vector field on $M$.
Let $(E_i)$ for $i\le 2\,n+p$ be a local orthonormal frame on $M$ that extends the set $(\xi_i)$ for $i\le p$, i.e., $E_i=\xi_i$ for $i\le p$.
Contracting \eqref{E-sol-04} yields
\begin{equation}\label{E-sol-05}
 \xi_i(u^i) = -s -(2\,n+p)\lambda ,
\end{equation}
where $s$ is the scalar curvature of $(M,g)$. Differentiating \eqref{E-sol-04} along a vector field $Z$ and contracting (i.e., $Y=Z=E_i$) provides
\begin{equation}\label{E-sol-06}
 \xi_i(Y u^i) +\<\nabla_{\nabla u^i}\,\xi_i, Y\> -\eta^i(Y)\Delta u^i +2(h_i\,\varphi(Y)) u^i + 2\,u^i[\Ric(Y,\xi_i) -2\,n\,\eta^i(Y)] =0 ,
\end{equation}
where we used \eqref{E-sol-03b} and notation $\Delta u^i = -\tr({\rm Hess}(u^i))$.
Substituting $\xi_j$ for $Y$ in \eqref{E-sol-06} yields
\begin{equation}\label{E-sol-07}
 \xi_i(\xi_j\, u^i) -\Delta u^j -2\,u^i \tr(h_i\circ h_j) + \xi_j(s) = 0.
\end{equation}
The use of \eqref{E-sol-05} in \eqref{E-sol-07} and commutativity $\xi_i(\xi_j u^i)=\xi_j(\xi_i\,u^i)$ provides
\begin{equation}\label{E-sol-08}
 \Delta\, u^j = -2\,u^i \tr(h_i\circ h_j) .
\end{equation}
From \eqref{E-sol-08}, after summation we obtain
\begin{equation}\label{E-sol-09}
 \Delta\big(\sum\nolimits_j(u^j)^2\big) = -2\sum\nolimits_j\,\|\nabla h_j\|^2 - 4\,\|h_V\|^2 ,
\end{equation}
where $\|h_V\|^2=u^j u^i \tr(h_i h_j)\ge0$.
Integrating \eqref{E-sol-09} on $M$ and using the Divergence Theorem yield $u^i={\rm const}$ for all $i$.
Thus, $X$ is a Killing field and \eqref{E-sol-01} reduces to $\Ric = -\lambda\,g$, i.e., $(M,g)$ becomes an Einstein manifold.
\end{proof}

Ch. Boyer and K. Galicki \cite{bg2001} proved that a K-contact manifold, which is an Einstein manifold, is a Sasaki-Einstein manifold.
In the higher dimensional case the situation slightly different.

Let $M^{2n+p}(\varphi,\xi_i,\eta^i,g)$ be an almost  ${\cal S}$-manifold with $p>1$.
Put $\bar{\xi} = \sum_{\,i}\xi_i$ and $\bar{\xi}_i =\xi_i -\xi_1$ for $i=2,\ldots,p$.
The distribution $\bar{\cal D} ={\cal D}\oplus \<\bar{\xi}\>$ is integrable  and the vector fields
$\bar{\xi}_2,\ldots,\bar{\xi}_p$ are orthogonal to $\bar{\cal D}$, so is  the foliation $\bar{\mathcal F}$ defined by these Killing vector fields.
Both foliations are Riemannian and totally geodesic.
By the deRham decomposition theorem, cf. \cite{KN}, applied to $\bar{\cal D}$ and $\bar{\cal F}$),
locally, $(M,g)$ is a Riemannian product of leaves of these foliations with the induced metrics.
Moreover, the Killing vector fields $\bar{\xi}_2,\ldots,\bar{\xi}_p$ are parallel, thus if $M$ is compact, then $\Ric(\bar{\xi}_i,\bar{\xi}_i) = 0$ for any $i=2,\ldots,p,$  cf. \cite[Theorem 3]{bn2008}.
As an easy consequence of these consideration we get the following.

\begin{theorem}\label{T-06}
There are no Einstein compact almost $S$-manifolds.
\end{theorem}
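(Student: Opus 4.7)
The plan is to assume for contradiction that $(M^{2n+p},g)$ is a compact Einstein almost ${\cal S}$-manifold with $p>1$ and $\Ric = c\,g$, and then derive a contradiction in two steps.

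First I would invoke the paragraph immediately preceding the theorem: each $\bar\xi_i = \xi_i - \xi_1$ (for $i = 2,\ldots,p$) is a nontrivial parallel vector field on the compact manifold $M$, with $|\bar\xi_i|^2 = 2$ by the orthonormality of the Reeb fields (which follows by applying \eqref{2.1} to $\xi_j$ and using $\varphi\xi_j = 0$ together with linear independence of the $\xi_i$). Since a parallel vector field on a compact Riemannian manifold lies in the kernel of the Ricci tensor (cf.\ \cite[Theorem 3]{bn2008}), the Einstein equation gives
\[
  2c \;=\; c\,g(\bar\xi_i,\bar\xi_i) \;=\; \Ric(\bar\xi_i,\bar\xi_i) \;=\; 0,
\]
so $c = 0$ and $(M,g)$ is Ricci-flat. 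This is the ``easy'' half.

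Next I would extract pointwise constraints on the $h_i$'s. Setting $X = \xi_j$ in \eqref{E-sol-03b}, using $\bar\eta(\xi_j) = 1$ and the Ricci-flat hypothesis $\Ric(\xi_i,\xi_j) = 0$, I get $(\Div(h_i\circ\varphi))(\xi_j) = -2n$. A direct computation---expanding $\Div$ in a local orthonormal frame at a normal-coordinate point, and using $h_i\xi_k = 0$, the anti-commutation $h_i\varphi = -\varphi h_i$, and $\varphi^2 = -I + \sum_k \eta^k\otimes\xi_k$ together with the consequent identity $\varphi h_i^2\varphi = -h_i^2$---simplifies the left-hand side to $-\tr(h_i\circ h_j)$. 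Hence pointwise $\tr(h_i\circ h_j) = 2n$ for all $i,j$. The pointwise Cauchy--Schwarz inequality for the inner product $(A,B)\mapsto\tr(A\circ B)$ on self-adjoint operators, saturated here, then forces $h_1 = h_2 = \cdots = h_p =: h$ everywhere, with $\tr(h^2) = 2n$.

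The main obstacle is converting this pointwise constraint into an actual contradiction with Ricci-flatness. My plan is to invoke the local de Rham splitting from the preceding paragraph: $(M,g)$ locally factors as $L_1\times L_2$ with $L_2$ flat of dimension $p-1$ and $L_1$ a compact $(2n+1)$-dimensional Ricci-flat leaf of $\bar{\cal D}$, which inherits (after the rescaling $\bar\eta\mapsto\bar\eta/\sqrt{p}$) an almost contact metric structure with $d\eta'\ne 0$. A Boyer--Galicki-type obstruction applied to $L_1$---combined with the pointwise identity $\Ric(\xi,\xi) = 2n - \tr(h^2)$ reduced to this factor---should then contradict Ricci-flatness of $L_1$. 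The delicate step is tracking how the rescaled structure descends to the $(2n+1)$-dimensional factor and rigorously invoking a Ricci-flat nonexistence result there; this is, in my view, the nontrivial part of the proof, the rest being the direct computations above.
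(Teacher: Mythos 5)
Your first step coincides exactly with the paper's: the fields $\bar\xi_i=\xi_i-\xi_1$ are parallel on the compact manifold, so $\Ric(\bar\xi_i,\bar\xi_i)=0$ by \cite[Theorem 3]{bn2008}, and the Einstein equation forces $\lambda=0$, i.e.\ $(M,g)$ is Ricci-flat. After that, however, there is a genuine gap: your argument never actually reaches a contradiction. The pointwise constraints you extract --- $\tr(h_i\circ h_j)=2n$ for all $i,j$, hence by Cauchy--Schwarz $h_1=\dots=h_p=:h$ with $\tr(h^2)=2n$ --- are exactly what the identity $\Ric(\xi_i,\xi_j)=2n-\tr(h_i\circ h_j)$ predicts under Ricci-flatness, so they are perfectly consistent with the hypothesis you are trying to refute and cannot by themselves produce a contradiction. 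The step you defer to the end (``a Boyer--Galicki-type obstruction applied to $L_1$ \dots rigorously invoking a Ricci-flat nonexistence result there'') is precisely the content of the theorem being proved: the Boyer--Galicki theorem \cite{bg2001} is a structure result (K-contact Einstein $\Rightarrow$ Sasaki--Einstein), not a nonexistence result, and no cited statement excludes compact Ricci-flat almost contact metric manifolds, so there is nothing concrete to invoke on the $(2n+1)$-dimensional de Rham factor. As you yourself acknowledge, this is ``the nontrivial part of the proof,'' and it is missing.

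The paper closes the argument by a different and much shorter route, which is the idea your proposal lacks. On a compact Ricci-flat manifold, the Bochner-type theorem of Fischer--Wolf \cite[Theorem 1.1]{fw75} forces the Killing field $\bar\xi=\sum_i\xi_i$ to be parallel. But a parallel $\bar\xi$ gives
\[
 d\bar\eta(X,Y)=g(\nabla_X\bar\xi,Y)-g(\nabla_Y\bar\xi,X)=0,
\]
whereas the defining condition \eqref{2.1b} yields $d\bar\eta(X,Y)=p\,g(X,\varphi(Y))$, which is nonzero on ${\cal D}$ since ${\rm rank}(\varphi)=2n>0$. The contradiction thus comes from the nondegeneracy of $d\eta^i$ on ${\cal D}$ --- the contact-type part of the structure --- which your pointwise trace identities never touch; your factor $L_1$ still carries $\bar\xi$ and $d\bar\eta\ne0$, so any obstruction you could legitimately apply there would ultimately have to be this one.
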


\begin{proof}
Let $M^{2n+p}(\varphi,\xi_i,\eta^i,g)$ be an almost $S$-manifold.
The Riemannian metric $g$ satisfies the equation $\Ric(X,Y) = \lambda\, g(X,Y)$ for any vectors $X,Y$ and some real constant $\lambda$.
Choosing any $X=Y=\bar{\xi}_i$, we get $\lambda =0$. Therefore, the Riemannian manifold $(M,g)$ is Ricci flat.
Next, by the Bochner Theorem, cf. \cite[Theorem 1.1]{fw75}, the Killing field $\bar{\xi}$ is a parallel vector field.
This property together with the fact that the distribution ${\cal D}$ is $\bar{\xi}$-invariant ensure that $d\bar{\eta}=0$ -- a~contradiction.
\end{proof}

Combining Theorems~\ref{T-05} and \ref{T-06}, we obtain the following:

\begin{corollary}
There is no compact almost $S$-manifold $M^{2n+p}(\varphi,\xi_i,\eta^i,g)$ that is a Ricci soliton for a vector field $X$ tangent to $\ker\varphi$.
\end{corollary}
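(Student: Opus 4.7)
The plan is short because this corollary sits at the confluence of the two main structural results of the section. Suppose, for contradiction, that there exists a compact almost ${\cal S}$-manifold $M^{2n+p}(\varphi,\xi_i,\eta^i,g)$ which is a Ricci soliton with potential vector field $X$ tangent to $\ker\varphi$, i.e., $X$ lies in the span of the Reeb fields $\xi_1,\ldots,\xi_p$. I split into two cases according to whether $X$ is trivial.

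If $X\equiv 0$, then the soliton equation \eqref{E-sol-01} collapses to $\Ric=-\lambda\,g$, so $(M,g)$ is already Einstein. But Theorem~\ref{T-06} asserts that no compact almost ${\cal S}$-manifold can be Einstein, a contradiction.

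If $X\not\equiv 0$, then we are in the precise hypothesis of Theorem~\ref{T-05}: $(M,g)$ is compact, almost ${\cal S}$, a Ricci soliton, with nonzero potential field tangent to $\ker\varphi$. That theorem concludes $(M,g)$ is Einstein, which again contradicts Theorem~\ref{T-06}.

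In both cases we reach a contradiction, so no such almost ${\cal S}$-manifold Ricci soliton exists. The main ``obstacle'' was already overcome in the preceding two theorems: Theorem~\ref{T-05} does the analytic work (integration of \eqref{E-sol-09} via the Divergence Theorem to force the coefficients $u^i$ to be constants, reducing a genuine soliton to an Einstein metric), while Theorem~\ref{T-06} supplies the structural obstruction (the parallel Killing fields $\bar\xi_i$ forbid Einstein metrics on the almost ${\cal S}$ side). The present corollary only needs to combine them and dispose of the trivial case $X=0$ separately.
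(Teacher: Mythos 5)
Your proposal is correct and follows essentially the same route as the paper, which simply combines Theorem~\ref{T-05} (a nonzero potential field tangent to $\ker\varphi$ forces the soliton to be Einstein) with Theorem~\ref{T-06} (no compact almost ${\cal S}$-manifold is Einstein). Your separate treatment of the trivial case $X\equiv 0$ is a small but harmless addition that the paper leaves implicit.
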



\end{document}